\begin{document}

\newtheorem{theorem}{Theorem}[section]
\newtheorem{lemma}[theorem]{Lemma}
\newtheorem{corollary}[theorem]{Corollary}
\newtheorem{conjecture}[theorem]{Conjecture}
\newtheorem{question}[theorem]{Question}
\newtheorem{problem}[theorem]{Problem}
\newtheorem*{claim}{Claim}
\newtheorem*{criterion}{Criterion}
\newtheorem*{main_thm}{Theorem A}

\theoremstyle{definition}
\newtheorem{definition}[theorem]{Definition}
\newtheorem{construction}[theorem]{Construction}
\newtheorem{notation}[theorem]{Notation}

\theoremstyle{remark}
\newtheorem{remark}[theorem]{Remark}
\newtheorem{example}[theorem]{Example}

\def\Z{\mathbb Z}
\def\homeo{\textnormal{Homeo}}
\def\diff{\textnormal{Diff}}
\def\inte{\textnormal{int}}
\def\fix{\textnormal{fix}}
\def\CP{\mathbb{CP}}
\def\R{\mathbb R}
\def\RP{\mathbb{RP}}
\def\r{{\bf r}}
\def\Id{\textnormal{Id}}
\def\SL{\textnormal{SL}}
\def\PSL{\textnormal{PSL}}
\def\length{\textnormal{length}}
\def\til{\tilde}

\title{Nonsmoothable, locally indicable group actions on the interval}
\author{Danny Calegari}
\address{Department of Mathematics \\ Caltech \\
Pasadena CA, 91125}
\email{dannyc@its.caltech.edu}

\date{2/19/2008, Version 0.04}

\begin{abstract}
By the Thurston stability theorem, a group of $C^1$ orientation-preserving diffeomorphisms
of the closed unit interval is locally indicable. We show that the local order structure of orbits
gives a stronger criterion for nonsmoothability that can be used to produce new examples of 
locally indicable groups of homeomorphisms
of the interval that are not conjugate to groups of $C^1$ diffeomorphisms.
\end{abstract}

\maketitle

\section{Introduction}
\subsection{Acknowledgment}
This note was inspired by a comment in a lecture by Andr\'es Navas. I would like to thank
Andr\'es for his encouragement to write it up. I would also like to thank the referee,
whose many excellent comments have been incorporated into this paper.

\section{Nonsmoothable actions}

\subsection{Thurston stability theorem}
A simple, but important case of the Thurston Stability Theorem is usually stated in
the following way:

\begin{theorem}[Thurston Stability Theorem \cite{Thurston}]\label{tst}
Let $G$ be a group of orientation-preserving $C^1$ diffeomorphisms of the closed interval
$I$. Then $G$ is locally indicable; i.e. every nontrivial finitely generated subgroup $H$ of $G$
admits a surjective homomorphism to $\Z$.
\end{theorem}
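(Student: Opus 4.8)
The plan is to run Thurston's blow-up argument. First I would reduce to the case where $G=H$ is finitely generated and nontrivial, with a fixed generating set $g_1,\dots,g_k$, and observe that it is enough to produce a single \emph{nontrivial} homomorphism $H\to\R$: its image is a nontrivial finitely generated torsion-free abelian group, hence isomorphic to $\Z^m$ with $m\ge 1$, which surjects onto $\Z$, so composing gives the required surjection. Since every orientation-preserving homeomorphism of $I$ fixes the two endpoints, the set $\fix(H)=\bigcap_i \fix(g_i)$ is a proper, nonempty, closed subset of $I$. Choose $p\in\partial\,\fix(H)$; then every generator fixes $p$, while points moved by some generator accumulate at $p$, say from the right (the other side is symmetric).

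There is an easy case. For any $q\in\fix(H)$ the chain rule makes $g\mapsto\log g'(q)$ a homomorphism $H\to\R$; if one of these is nontrivial we are done. So assume $g'(q)=1$ for every $g\in H$ and every $q\in\fix(H)$. By continuity of derivatives this forces all the $g_i'$ to be uniformly close to $1$ on some one-sided neighborhood $[p,p+\eta_0]$, which is the setting for the blow-up.

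For the main step I would choose a sequence of ``windows'' $J_n\subseteq[p,p+\eta_0]$ shrinking down to $p$ and containing non-fixed points, put $c_n=\max_i\max_{J_n}|g_i-\Id|>0$, rescale $J_n$ affinely onto $[0,1]$ by a map $\tau_n$, and define $\varphi_{n,i}(t)=(g_i(\tau_n(t))-\tau_n(t))/c_n$, so that the $\varphi_{n,i}$ are pointwise bounded by $1$ with $\max_i\|\varphi_{n,i}\|_\infty=1$. The role of the $C^1$ hypothesis is that $\varphi_{n,i}(t)-\varphi_{n,i}(s)$ equals $c_n^{-1}\int_{\tau_n(s)}^{\tau_n(t)}(g_i'-1)$, so the family $\{\varphi_{n,i}\}$ is equicontinuous \emph{provided the windows are chosen so that $|J_n|\cdot\sup_{J_n}|g_i'-1|=O(c_n)$} --- i.e.\ at a scale where the displacement already attains, up to a constant, its a priori Lipschitz bound, so that no fine-scale cancellation is lost; one arranges simultaneously that $c_n/|J_n|\to 0$. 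Finding such windows is a renormalization argument: start with any window touching $\fix(H)$ and repeatedly zoom in until the displacement becomes comparable to its a priori bound at that scale. Granting this, the Arzel\`a--Ascoli theorem yields a subsequence along which $\varphi_{n,i}\to\varphi_i$ uniformly for every $i$; since the rescaled generators converge uniformly to the identity (again by $C^1$-ness, via $c_n/|J_n|\to 0$), the cocycle identity $(gh-\Id)=(g-\Id)\circ h+(h-\Id)$ passes to the limit and makes $g\mapsto\varphi_g$ a homomorphism $H\to(C([0,1]),+)$. It is nontrivial because $\max_i\|\varphi_i\|_\infty=\lim_n\max_i\|\varphi_{n,i}\|_\infty=1$, so composing with evaluation at a point where this maximum is attained gives a nontrivial homomorphism $H\to\R$, as wanted.

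The main obstacle is precisely this equicontinuity: one must locate windows at the right scale so that $C^1$-smoothness of the displacement functions provides a uniform modulus of continuity for the rescaled maps, while the normalization by $c_n$ keeps the limiting homomorphism nonzero. This balancing act is exactly where $C^1$ (rather than merely $C^0$) is used, and it is where essentially all of the work lies; the reductions and the extraction of the homomorphism from the limit are soft.
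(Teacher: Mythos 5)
There is a genuine gap, and it sits exactly at the step you yourself flag as carrying all the weight: the existence of windows $J_n$ with $|J_n|\cdot\sup_{J_n}\max_i|g_i'-1|=O(c_n)$ when you normalize by $c_n=\max_i\sup_{J_n}|g_i-\Id|$. Your proposed mechanism --- ``start with any window touching $\fix(H)$ and zoom in until the displacement becomes comparable to its a priori bound'' --- does not produce such windows in general. Take a single generator whose displacement near $p=0$ is $u(x)=x^2\bigl(\sin^2(x^{-1/2})+x^{10}\bigr)$; this is $C^1$ with $u'(0)=0$ and $u>0$ on $(0,\delta]$, so every window touching $\fix(H)$ near $p$ has the form $[0,L]$. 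There $\sup_{[0,L]}u\asymp L^2$ while $\sup_{[0,L]}|u'|\asymp L^{1/2}$, so the ratio $\sup_{[0,L]}u/(L\sup_{[0,L]}|u'|)\asymp L^{1/2}\to 0$: no amount of zooming ever achieves comparability, and indeed the sup-normalized rescalings $t\mapsto u(Lt)/\sup_{[0,L]}u\approx t^2\sin^2\bigl((Lt)^{-1/2}\bigr)$ oscillate with bounded amplitude and unbounded frequency, so they are not equicontinuous and have no uniformly convergent subsequence. So as written the Arzel\`a--Ascoli step can fail for every admissible window in your scheme; the renormalization claim is not just unproved but false in the form stated.

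The fix is to change the normalization, and it is what Thurston's argument (and the sketch in the paper, ``points that are moved a definite distance, but not too far'') actually does: pick non-fixed points $x_n\to p$, set $d_n=\max_i|g_i(x_n)-x_n|$, and take $J_n$ centered at $x_n$ of radius $R_n d_n$ with $R_n\to\infty$ slowly (slowly enough that $R_n\varepsilon_n\to 0$, where $\varepsilon_n=\sup_{J_n}\max_i|g_i'-1|\to 0$ since $g_i'(p)=1$), normalizing by the pointwise displacement $d_n$ rather than by the sup over the window. Then the rescaled displacements are automatically $R_n\varepsilon_n$-Lipschitz after rescaling, so equicontinuity is free, the subsequential limits are \emph{constants} (the limiting action is by translations), nontriviality is read off at the marked point where some $|g_i(x_n)-x_n|/d_n=1$, and $c_n/|J_n|\le(1+R_n\varepsilon_n)/R_n\to 0$ handles the cocycle/composition step. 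In fact one can dispense with function-space limits altogether and only track the vectors $\bigl((g_i(x_n)-x_n)/d_n\bigr)_i$ on the unit sphere of $\R^k$, using $|u_g(y)-u_g(x_n)|\le\varepsilon_n|y-x_n|$ to show additivity of the limit along a subsequence; you would also then need (and currently gloss) the diagonal extraction giving convergence for all of the countable group $H$, not just the generators. With the sup-over-window normalization you chose, none of this self-corrects, so the proof as proposed is incomplete at its central step.
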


The proof is non-constructive, and uses the axiom of choice. The idea is to ``blow up'' the
action of $H$ near one of the endpoints at a sequence of points that are moved a definite
distance, but not too far. Some subsequence of blow-ups converges to an action by translations.

Note that it is only {\em finitely} generated subgroups that 
admit surjective homomorphisms to $\Z$, as the following example of Sergeraert shows.

\begin{example}[Sergeraert \cite{Sergeraert}]
Let $G$ be the group of $C^\infty$ orientation-preserving diffeomorphisms of $I$ that
are infinitely tangent to the identity at the endpoints. Then $G$ is perfect.
\end{example}

Another countable example comes from Thompson's group.

\begin{example}[Navas \cite{Navas2}, Ghys-Sergiescu \cite{Ghys_Sergiescu}]
Thompson's group $F$ of dyadic rational piecewise linear homeomorphisms of $I$ is known
to be conjugate to a group of $C^\infty$ diffeomorphisms. On the other hand, the
commutator subgroup $[F,F]$ is simple; since it is non-Abelian, it is perfect.
\end{example}

Given a group $G \subset \homeo_+(I)$, Theorem~\ref{tst} gives a criterion to show that the action of
$G$ is not conjugate into $\diff^1_+(I)$. It is natural to ask
whether Thurston's criterion is sharp. That is, suppose $G$ is
locally indicable. Is it true that every homomorphism from $G$ into $\homeo_+(I)$ is conjugate
into $\diff^1_+(I)$? It turns out that the answer to this question is no. However, apart from
Thurston's criterion, very few
obstructions to conjugating a subgroup of $\homeo_+(I)$ into $\diff^1_+(I)$ are known.
Most significant are dynamical obstructions concerning the existence of elements with
hyperbolic fixed points when the action has positive topological entropy \cite{Hurder},
or when there is no invariant probability measure \cite{DKN} (also, see \cite{Cantwell_Conlon}).

In this note we give some new examples of actions of locally indicable groups
on $I$ that are not conjugate to $C^1$ actions. 

\begin{example}[$\Z^\Z$]\label{abelian}
Let $T:I \to I$ act freely on the interior, so that $T$ is conjugate to a translation.
Let $I_0 \subset \inte(I)$ be a closed fundamental domain for $T$, and let $S:I_0 \to I_0$
act freely on the interior. Extend $S$ by the identity outside $I_0$ to an element of $\homeo_+(I)$.
For each $i \in \Z$ let $I_i = T^i(I_0)$ and let
$S_i:I_i \to I_i$ be the conjugate $T^i S T^{-i}$.
For each $f \in \Z^\Z$ define $Z_f$ to be the product
$$Z_f = \prod_{i \in \Z} S_i^{f(i)}$$
Let $G$ be the group consisting of all elements of the form $Z_f$. Then $G$ is isomorphic to
$\Z^\Z$ and is therefore abelian. 

However, $G$ is not conjugate into $\diff_+^1(I)$. For, suppose otherwise, so that there is
some homeomorphism $\varphi:I \to I$ so that the conjugate $G^\varphi \subset \diff_+^1(I)$.
We suppose by abuse of notation that $S_i$ denotes the conjugate $S_i^\varphi$.
For each $i$, let $p_i$ be the midpoint of $I_i$.
Since for each fixed $i$ the sequence $S_i^n(p_i)$ converges to an endpoint
of $I_i$ as $n$ goes to infinity, it follows that for each $i$ there is some $n_i$ so
that $dS_i^{n_i}(p_i) < 1/2$. Let $F \in \Z^\Z$ satisfy $F(i) = n_i$. Then
$dZ_F(p_i) < 1/2$ for all $i$. However, $Z_F$ fixes the endpoints of $I_i$ for all $i$, so $Z_F$
has a sequence of fixed points converging to $1$. It follows that $dZ_F(1) = 1$. But
$p_i \to 1$, so if $Z_F$ is $C^1$ we must have $dZ_F(1) \le 1/2$. This contradiction shows that
no such conjugacy exists.
\end{example}

\begin{remark}
The group $\Z^\Z$ is locally indicable, but uncountable. Note in fact that this group
action is not even conjugate to a {\em bi-Lipschitz} action. On the other hand,
Theorem~D from \cite{DKN} says that every countable group of homeomorphisms of the
circle or interval is conjugate to a group of bi-Lipschitz homeomorphisms.
\end{remark}

\subsection{Order structure of orbits}

In this section we describe a new criterion for non-smoothability, depending on the local
order structure of orbits.

\begin{definition}
Let $G$ act on $I$ by $\rho:G \to \homeo_+(I)$. A point $p \in I$ determines an order $<_p$
on $G$ by 
$$a <_p b$$
if and only if $a(p) < b(p)$ in $I$.
\end{definition}

Note that with this definition, $<_p$ is really an order on the left $G$-space 
$G/G_p$, where $G_p$ denotes the stabilizer of $p$.

\begin{lemma}\label{order_structure}
Suppose $\rho:G \to \diff_+^1(I)$ is injective. Let $H$ be a finitely generated subgroup
of $G$, with generators $S = \lbrace h_1,\cdots,h_n \rbrace$. 
Let $p \in I$ be in the frontier of $\fix(H)$
(i.e. the set of common fixed points of all elements of $H$) and let $p_i \to p$ be
a sequence contained in $I - \fix(H)$. Then 
there is a sequence $k_m \in \lbrace 1, \dots, n\rbrace$ and $e_m \in \pm 1$
such that for any $h \in [H,H]$, and for all sufficiently large $m$ (depending on $h$), there
is an inequality
$$h <_{p_m} h_{k_m}^{e_m}$$
\end{lemma}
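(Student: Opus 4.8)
The plan starts from a structural observation: the frontier of $\fix(H)$ is contained in $\overline{\fix(H)}=\fix(H)$ (an intersection of closed sets), so $p$ is a common fixed point of $H$, and the linear part $D\colon H\to\R_{>0}$, $D(g)=g'(p)$, is a homomorphism. Since $\R_{>0}$ is abelian, $[H,H]\subseteq\ker D$, so every $h\in[H,H]$ is tangent to the identity at $p$ and hence satisfies $h(x)-x=o(|x-p|)$ as $x\to p$. I would split the argument according to the dichotomy $D\equiv 1$ versus $D\not\equiv 1$; the first alternative always holds when $p$ is a non-isolated point of $\fix(H)$ (then $g'(p)=1$ for all $g\in H$, since $g$ fixes a sequence accumulating at $p$), but may also hold otherwise. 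A preliminary reduction: $p_m\neq p$ because $p_m\notin\fix(H)$, so by passing to an infinite subcollection of indices on which $p_m$ lies on a fixed side of $p$, and choosing $(k_m,e_m)$ separately on each such subcollection, it suffices to treat a sequence $(p_m)$ lying entirely on one side of $p$; assume $p_m>p$ for all $m$.

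\textbf{Case $D\not\equiv 1$.} Fix a generator $h_{k_0}$ with $h_{k_0}'(p)\neq 1$, set $k_m=k_0$ for all $m$, and choose a fixed exponent $e_0\in\{\pm1\}$ so that $\nu:=(h_{k_0}^{e_0})'(p)>1$. A first-order Taylor expansion at $p$ gives $h_{k_0}^{e_0}(p_m)-p_m=(\nu-1)(p_m-p)(1+o(1))$, which is bounded below by a fixed positive multiple of $p_m-p$ for $m$ large. On the other hand, for $h\in[H,H]\subseteq\ker D$ we have $h(p_m)-p_m=o(p_m-p)$, so $h(p_m)-p_m<h_{k_0}^{e_0}(p_m)-p_m$ for all $m$ large enough depending on $h$; equivalently $h<_{p_m}h_{k_0}^{e_0}$, as required.

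\textbf{Case $D\equiv 1$}, i.e.\ $g'(p)=1$ for every $g\in H$. This is a quantitative form of Thurston's stability argument. Put $\lambda_m=\max_i|h_i(p_m)-p_m|$, which is positive because $p_m\notin\fix(H)$ forces some generator to move $p_m$, and set $v_m(g)=(g(p_m)-p_m)/\lambda_m$. The two technical inputs are: (i) for each generator or inverse of a generator $s$, the displacement $\Phi_s:=s-\Id$ is Lipschitz near $p$ with a constant controlled by the oscillation of $s'-1$ near $p$ (using $s'(p)=1$), hence with a constant that can be made as small as we wish by shrinking the neighborhood; and (ii) the same statement for an arbitrary fixed word $g$ in the generators, deduced from (i) via the chain rule and the fact that the intermediate maps fix $p$. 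From (i) and induction on word length one gets that $(v_m(g))_m$ is bounded for every $g\in H$; from (ii) one gets the asymptotic additivity $v_m(gh)-v_m(g)-v_m(h)\to 0$ as $m\to\infty$ for each fixed $g,h$. Writing a given $h\in[H,H]$ as a product of commutators and applying asymptotic additivity finitely many times (together with $v_m(g^{-1})=-v_m(g)+o(1)$), it follows that $v_m(h)\to 0$. Now let $k_m$ be the index of a generator realizing the maximum defining $\lambda_m$, and let $e_m\in\{\pm1\}$ be the exponent for which $h_{k_m}^{e_m}(p_m)>p_m$ (when $h_{k_m}(p_m)<p_m$, this is $e_m=-1$). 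A second use of (i), now with a fixed small Lipschitz constant and comparing the values of $\Phi_{h_{k_m}^{e_m}}$ at $p_m$ and at $h_{k_m}(p_m)$, yields $h_{k_m}^{e_m}(p_m)-p_m\ge\tfrac34\lambda_m$ for $m$ large. Since $v_m(h)\to 0$, for $m$ large (depending on $h$) we get $h(p_m)-p_m<\tfrac12\lambda_m<h_{k_m}^{e_m}(p_m)-p_m$, that is, $h<_{p_m}h_{k_m}^{e_m}$.

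I expect the main obstacle to be the package (i)--(ii) and the resulting boundedness and asymptotic additivity of the $v_m$ in the case $D\equiv 1$: this is where the $C^1$ hypothesis (rather than mere continuity) enters, and the estimates must be organized so that the fixed word length of $g$ may interact with the neighborhood size we are free to shrink, while every ``for all sufficiently large $m$'' remains permitted to depend on $h$, exactly as the statement allows. The rest is Taylor expansion and bookkeeping.
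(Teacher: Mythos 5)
Your argument is correct and is essentially the paper's proof: the same dichotomy on the derivative homomorphism $g \mapsto g'(p)$, with the nontrivial case settled by first-order Taylor expansion at $p$ and the trivial case by the Thurston stability mechanism. The only difference is one of execution --- where the paper rescales by the length of the interval spanned by $p_m$ and its images under the generators and extracts a limiting translation action, you run the same normalization quantitatively (boundedness and asymptotic additivity of the $v_m$, hence $v_m(h)\to 0$ for $h\in[H,H]$, versus a definite displacement $\ge \tfrac34\lambda_m$ for the extremal generator), which avoids the compactness/subsequence step and also makes explicit the choice of $e_m$ according to the side of $p$ on which $p_m$ lies, a point the paper passes over quickly.
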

\begin{proof}
There is a homomorphism $\rho:H \to \R$ defined by the formula $\rho(h) = \log h'(p)$.
Of course this homomorphism vanishes on $[H,H]$. If $h_i$ is such that
$\rho(h_i) \ne 0$ then (after replacing $h_i$ by $h_i^{-1}$ if necessary) it is
clear that for any $h \in [H,H]$, there is an inequality $h <_{p_m} h_i$ for all 
$p_m$ sufficiently close to $p$. Therefore in the sequel we assume $\rho$ is
trivial.

For each $i$, let
$U_i$ be the smallest (closed) interval containing $p_i \cup Sp_i$.
Given a bigger open interval $V_i$ containing $U_i$, one can rescale $V_i$ linearly
by $1/\length(U_i)$ and move $p_i$ to the origin thereby obtaining an interval 
$\overline{V}_i$ on which $H$ has a partially defined action as a pseudogroup.

The argument of the Thurston stability theorem implies that one can choose a sequence
$V_i$ such that any sequence of indices $ \to \infty$ contains a subsequence for which
$\overline{V}_i \to \R$, and the pseudogroup actions converge, in the compact-open topology,
to a (nontrivial) action of $H$ on $\R$ by {\em translations}. In an action by
translations, some generator or its inverse moves $0$ a positive distance, 
but every element of $[H,H]$ acts trivially. The proof follows.
\end{proof}

\begin{example}
Let $T$ be a hyperbolic once-punctured torus with a cusp. The hyperbolic
structure determines up to conjugacy a faithful homomorphism $\rho:\pi_1(T) \to \PSL(2,\R)$.

The group $\PSL(2,\R)$ acts by real analytic homeomorphisms on $\RP^1 = S^1$.
Since $\pi_1(T)$ is free on two generators (say $a,b$) the homomorphism $\rho$ lifts
to an action $\tilde{\rho}$ on the universal cover $\R$. We choose a lift so that both $a$ and
$b$ have fixed points. If we choose co-ordinates on $\R$ so that $a$ fixes $x$,
then $a$ also fixes $x+n$ for every integer $n$. Similarly, if $b$ fixes $y$, then
$b$ fixes $y+n$ for every $n$. On the other hand, if $p \in S^1$ is the parabolic
fixed point of $[a,b]$, and $\til{p}$ is a lift of $p$ to $\R$,
then the commutator $[a,b]$ takes $\til{p}$ to $\til{p}+1$. Since the action of every element
on $\R$ commutes with the generator of the deck group $x \to x+1$, the element $[a,b]$ acts on $\R$
without fixed points, and moves every point in the positive direction, satisfying
$[a,b]^n(z) > z + n -1$ for every $z \in \R$ and every positive integer $n$. See Figure~\ref{circle_action}.

\begin{figure}[ht]
\labellist
\small\hair 2pt
\pinlabel $p$ at 30 300
\pinlabel $s$ at 30 80
\pinlabel $q$ at 260 300
\pinlabel $r$ at 260 80
\pinlabel $a$ at 110 200
\pinlabel $b$ at 155 230
\pinlabel $\til{p}$ at 480 40
\pinlabel $\til{q}$ at 480 110
\pinlabel $\til{r}$ at 480 183
\pinlabel $\til{s}$ at 480 255
\pinlabel $\til{p}+1$ at 460 325
\pinlabel $a$ at 560 75
\pinlabel $b$ at 560 146
\pinlabel $a^{-1}$ at 570 220
\pinlabel $b^{-1}$ at 570 292
\endlabellist
\centering
\includegraphics[scale=0.4]{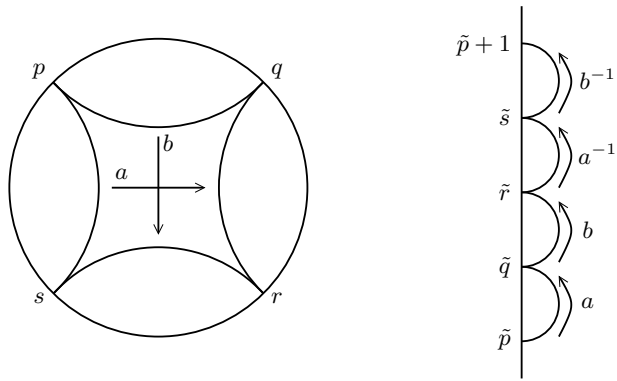}
\caption{In the lifted action, $a$ and $b$ have fixed points, but $[a,b]$
takes $\til{p}$ to $\til{p}+1$}\label{circle_action}
\end{figure}

This action on $\R$ can be made into an action on $I$ by homeomorphisms, by including
$\R$ in $I$ as the interior. Then the points $\til{p}+n \to \infty$ in $\R$ map to points
$p_n \to 1$ in $I$. Note that for each $n$, the elements $a$ and $b$ have fixed points $q_n,r_n$
respectively satisfying $p_n < q_n < p_{n+1}$ and $p_n < r_n < p_{n+1}$. 
Moreover, $[a,b](p_n) = p_{n+1}$ for all $n$. It follows that
$$a, a^{-1} <_{p_n} [a,b]^2, \quad b, b^{-1} <_{p_n} [a,b]^2$$
for every $n$, so by Lemma~\ref{order_structure},
this action is not topologically conjugate into $\diff_+^1(I)$. On the other hand,
this is a faithful action of the free group on two generators. A free group is locally
indicable, since every subgroup of a free group is free.
\end{example}

\begin{remark}
The relationship between order structures and dynamics of subgroups of
homeomorphisms of the interval is subtle and deep. For an introduction to this subject,
see e.g. \cite{Navas}.
\end{remark}

\end{document}